\documentclass[11pt,a4paper]{amsart}
\usepackage{graphicx,multirow,array,amsmath,amssymb}

\newtheorem{theorem}{Theorem}

\begin{document}

\title{Stick number of spatial graphs}

\author[M. Lee]{Minjung Lee} 
\address{Department of Mathematics, Korea University, Seoul 02841, Korea}
\email{mjmlj@korea.ac.kr}
\author[S. No]{Sungjong No}
\address{Department of Statistics, Ewha Womans University, Seoul 03760, Korea}
\email{sungjongno84@gmail.com}
\author[S. Oh]{Seungsang Oh}
\address{Department of Mathematics, Korea University, Seoul 02841, Korea}
\email{seungsang@korea.ac.kr}

\thanks{Mathematics Subject Classification 2010: 57M25, 57M27}
\thanks{The second author was supported by the BK21 Plus Project through the National Research Foundation of 
Korea (NRF) grant funded by the Korean Ministry of Education (22A20130011003).}
\thanks{The corresponding author(Seungsang Oh) was supported by the National Research Foundation of Korea(NRF) grant funded by the Korea government(MSIP) (No. NRF-2017R1A2B2007216).}

\maketitle

\begin{abstract}
For a nontrivial knot $K$,
Negami found an upper bound on the stick number $s(K)$  
in terms of its crossing number $c(K)$ which is $s(K) \leq 2 c(K)$. 
Later, Huh and Oh utilized the arc index $\alpha(K)$ to present a more precise upper bound
$s(K) \leq \frac{3}{2} c(K) + \frac{3}{2}$.
Furthermore, Kim, No and Oh found an upper bound on the equilateral stick number $s_{=}(K)$ 
as follows; $s_{=}(K) \leq 2 c(K) +2$.
As a sequel to this research program,
we similarly define the stick number $s(G)$ and the equilateral stick number $s_{=}(G)$ of a spatial graph $G$,
and present their upper bounds as follows;
$$ s(G) \leq \frac{3}{2} c(G) + 2e + \frac{3b}{2} -\frac{v}{2}, $$ 
$$ s_{=}(G) \leq 2 c(G) + 2e + 2b - k, $$
where $e$ and $v$ are the number of edges and vertices of $G$, respectively,
$b$ is the number of bouquet cut-components,
and $k$ is the number of non-splittable components.
\end{abstract}

\section{Introduction} \label{sec:intro}

Throughout this paper we work in the piecewise linear category.
A graph is a finite set of vertices connected by edges allowing loops and multiple edges.
A {\em spatial graph\/} is a graph embedded in $\mathbb{R}^3$.
We consider two spatial graphs to be the same if they are equivalent under ambient isotopy.
A {\em bouquet\/} is a spatial graph consisting of only one vertex and loops. 
Note that a knot is a spatial graph consisting of a vertex and a loop. 

A {\em stick spatial graph\/} is a spatial graph which consists of finite line segments, 
called {\em sticks}, as drawn in Figure~\ref{fig:stick}.
This presentation of spatial graphs can be considered to be a reasonable mathematical model
of polymers because such physical objects have rigidity.
Concerning stick spatial graphs, one natural problem may be to determine the number of sticks.
The {\em stick number\/} $s(G)$ of a spatial graph $G$ is defined to be 
the minimal number of sticks required to realize the spatial graph as a stick spatial graph.

\begin{figure}[h]
\includegraphics{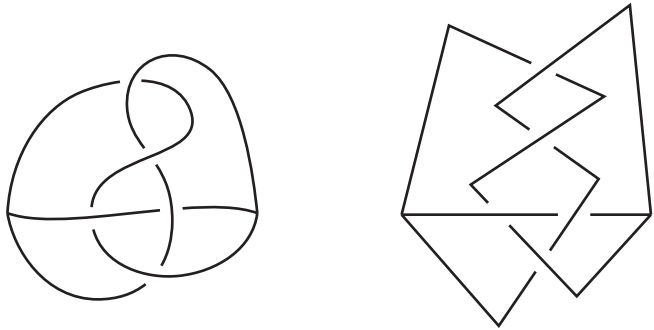}
\caption{Spatial $\theta$-curve $5_1$ and its stick presentation}
\label{fig:stick}
\end{figure}

The stick number of a nontrivial knot $K$ is related to its crossing number $c(K)$ 
by the following inequalities~\cite{Ca, HO1, Ne};
$$\frac{7+\sqrt{8c(K)+1}}{2} \leq s(K) \leq \frac{3}{2} c(K) + \frac{3}{2}.$$
Particularly, a 2-bridge knot $K$ with $c(K) \geq 6$ has a better upper bound 
$s(K) \leq c(K) + 2$ shown in~\cite{FLS, HNO, Mc}.
There are few knots whose stick number can be determined exactly.
Knots with small crossing numbers were investigated by Randell~\cite{Ra}. 
Jin~\cite{Ji} determined the precise stick number of a $(p,q)$-torus knot $T(p,q)$ 
in case of $2 \leq p \leq q \leq 2p$ as $s(T(p,q)) = 2q$.
The same result was found independently by Adams {\em et al.\/}~\cite{ABGW}, 
but for a smaller range of parameters.

As a sequel to this research program, we consider the stick number of spatial graphs.
We consider two types of 2-spheres that separate a spatial graph $G$ into two parts.
Such a 2-sphere is called a splitting-sphere if it does not meet $G$,
and a cut-sphere if it intersects $G$ in a single vertex which is called a cut vertex.
We maximally decompose $G$ into {\em cut-components\/} by cutting $G$ 
along a maximal set $\mathcal{S}$ of splitting-spheres and cut-spheres 
where any two spheres are either disjoint or intersect each other in a cut vertex.
$G$ is said to be {\em non-splittable\/} if there is no splitting-sphere separating $G$.
Each cut-component  is non-splittable.
In particular, if such a cut-component is a bouquet, it is called a {\em bouquet cut-component\/}.
The crossing number $c(G)$ is the minimal number of double points 
in any generic projection of $G$ into the plane $\mathbb{R}^2$.
Here, these double points are disjoint from the projected vertices of the spatial graph.

\begin{theorem} \label{thm:stick}
Let $G$ be any spatial graph with $e$ edges, $v$ vertices and $b$ bouquet cut-components.
Then 
$$ s(G) \leq \frac{3}{2} c(G) + 2e + \frac{3b}{2} - \frac{v}{2}. $$
\end{theorem}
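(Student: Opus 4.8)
The plan is to generalize the arc-presentation technique behind the Huh--Oh bound $s(K) \le \frac32 c(K) + \frac32$. Recall that for a knot one presents $K$ in an open book whose binding is the $z$-axis, so that $K$ meets the binding in $\alpha(K)$ points and decomposes into $\alpha(K)$ arcs, each lying in a single page (half-plane); the arc index is controlled by the crossing number, and an arc presentation with $\alpha$ arcs converts to a stick knot using about $\frac32\alpha$ sticks. I would first reduce to a single non-splittable cut-component. A splitting-sphere lets us place the two sides in disjoint balls, so their stick numbers add; a cut-sphere meeting $G$ in a cut vertex lets us place the adjacent cut-components in balls meeting only at that vertex, so the sticks again add with the shared vertex identified. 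Thus $s(G) \le \sum_i s(G_i)$ over the cut-components $G_i$. Moreover each edge lies in a unique $G_i$ so $\sum_i e_i = e$, each crossing of a minimal diagram lies within a single $G_i$ so $\sum_i c(G_i) \le c(G)$, and a cut vertex shared by several pieces is over-counted, so $\sum_i v_i \ge v$; since the vertex term enters negatively, it suffices to prove the per-component estimate $s(G_i) \le \frac32 c(G_i) + 2 e_i - \frac{v_i}{2}$ for non-bouquet cut-components, with an extra $\frac32$ for bouquet cut-components, and then sum.

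For a fixed non-splittable cut-component I would build an open-book (arc) presentation adapted to the graph: put all $v_i$ vertices on the binding axis, subdivide each edge into arcs by introducing degree-two \emph{junction} points on the binding, and arrange each arc inside one page so that the page heights encode the over/under data of a minimal projection. Mimicking the type of estimate that yields $\alpha(K) \le c(K) + 2$, the key combinatorial lemma I need is that the total number of arcs $a$ is bounded by a linear expression in $c(G_i)$, $e_i$ and $v_i$: each crossing forces a bounded number of page transitions along the edges through it, each edge contributes a fixed base cost, and each vertex, sitting on the binding, absorbs several arc-endpoints at once.

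The conversion to sticks then proceeds as follows. Realize each arc from binding point $p$ to binding point $q$ by two sticks meeting at an outer apex pushed into the interior of its page, so that a presentation with $a$ arcs naively uses $2a$ sticks. The savings that produce the coefficient $\frac32$ come from the degree-two binding points: at such a junction the two incident apices lie in consecutive pages, and by choosing those pages to be \emph{antipodal} half-planes we can force the apex--junction--apex triple to be collinear, so the two sticks merge into one and the junction disappears. One can arrange antipodal pairs at roughly half of the degree-two junctions---they cannot all be antipodal, since that would place non-adjacent arcs on a common page and create spurious intersections---and exploiting the two extremal pages squeezes out the extra constant exactly as in the knot case. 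At a vertex of degree $d$ the $d$ incident sticks genuinely meet and cannot be merged, so vertices carry no pass-through savings; however, because a vertex consolidates what would otherwise be several separate junctions, each vertex nets a saving of $\tfrac12$, which is the source of the term $-\frac{v}{2}$, while each edge leaves a residual base cost of $2$, giving $2e$.

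The remaining point is the bouquet correction. A \emph{bouquet cut-component} has a single vertex through which all of its loops pass, so the consolidation argument breaks down there: the unique high-degree vertex cannot supply the per-vertex saving for all the loops meeting it, and one must spend $\frac32$ extra sticks to close up the construction around it, yielding the additive $\frac{3b}{2}$. Summing the per-component bounds and adding $\frac32$ for each of the $b$ bouquet cut-components then gives the stated inequality. I expect the main obstacle to be precisely the constant bookkeeping in the last two steps: proving that the antipodal-page savings can be realized simultaneously at enough junctions without introducing new crossings, and verifying that the vertex consolidation yields exactly $-\frac{v}{2}$ rather than a weaker saving, with bouquet cut-components as the sole exceptional case requiring the $+\frac32$ penalty.
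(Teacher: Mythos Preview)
Your proposal outlines an approach that differs substantially from the paper's, and the gaps lie exactly where you predict. The paper does \emph{not} decompose $G$ into cut-components, does not use antipodal-page merging at degree-two junctions, and does not treat bouquets via a separate $+\tfrac32$ penalty.

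Instead, the paper works globally with an arc presentation of $G$ having $n=\alpha(G)$ arcs, redrawn as straight chords $l_1,\dots,l_n$ on a disk whose boundary plays the role of the binding axis. Each chord $l_k$ is lifted to height roughly $k$. The key device is a classification by \emph{initiating ends}: at each binding point $p$, exactly one incident chord---the one of smallest page number---has $p$ as an initiating end. A chord with two initiating ends is realized by a single horizontal stick; a chord with one initiating end by a single oblique stick whose non-initiating end drops down to the already-built lower chord at that binding point; only a chord with no initiating ends needs two sticks, bent at its midpoint. Hence $s(G)\le n+n_0$, where $n_0$ counts non-initiating chords. Since the number of binding points equals $n-e+v$ and each contributes exactly one initiating end among the $2n$ chord-ends, there are $n+e-v$ non-initiating ends and thus $n_0\le\tfrac{n+e-v}{2}$, giving
\[
s(G)\ \le\ \tfrac32\,\alpha(G)+\tfrac{e}{2}-\tfrac{v}{2}.
\]
Substituting the arc-index bound $\alpha(G)\le c(G)+e+b$ yields the theorem. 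In particular the term $\tfrac{3b}{2}$ comes entirely from the arc-index estimate; no special bouquet surgery is performed in the stick construction.

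Your plan, by contrast, leaves the essential lemmas unproved. You state but do not establish the arc-count bound for a cut-component. The claim that ``roughly half'' of the degree-two junctions can be made antipodal relies, in the knot case, on the junctions lying along a single cycle so one can alternate; a graph with branching has no such linear order, and you give no substitute mechanism. The assertion that each vertex ``nets a saving of $\tfrac12$'' is a heuristic, not an argument---nothing in your sketch forces that exact constant. Finally, the decomposition step itself is delicate: you need $\sum_i c(G_i)\le c(G)$, which requires knowing that a minimal-crossing diagram of $G$ already separates the cut-components, a nontrivial fact you do not address. The paper's route avoids all of this: the $-\tfrac{v}{2}$ drops out of the binding-point count, and bouquets are absorbed into $\alpha(G)$ rather than handled by ad hoc corrections.
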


For a knot $K$, this result implies $s(K) \leq \frac{3}{2} c(K) + 3$ 
which is very close to the previously known upper bound.
Furthermore for both the unlink with $n$ components and the trivial $\theta_n$-curve even for any large $n$,
the stick number $s(G)$ is actually equal to the value $2e + \frac{3b}{2} - \frac{v}{2}$ of $G$ (obviously $c(G)=0$).
This implies that this sum of the last three terms is best possible.

We are also interested in another quantity concerning stick spatial graphs.
An {\em equilateral spatial graph\/} is a stick spatial graph which consists of uniform length sticks.
The {\em equilateral stick number\/} $s_{=}(G)$ of a spatial graph $G$ is defined to be 
the minimal number of sticks required to construct an equilateral spatial graph representation of $G$.

Even for knots, little is known about the equilateral stick number.
Rawdon and Scharein~\cite{RS} used algorithms in the software KnotPlot to compute
upper bounds for the equilateral stick number of all prime knots with up to 10 crossings.
They showed that all such knots except seven can be constructed with 
the same number of equal length sticks as their stick numbers.
Recently, Kim, No and Oh~\cite{KNO} found an upper bound for a nontrivial knot $K$ as follows;
$$ s_{=}(K) \leq 2 c(K) +2. $$

\begin{theorem} \label{thm:equilateral}
Let $G$ be any spatial graph with $e$ edges and $b$ bouquet cut-components.
Suppose that $G$ has $k$ non-splittable components.
Then 
$$ s_{=}(G) \leq 2 c(G) + 2e + 2b - k. $$
\end{theorem}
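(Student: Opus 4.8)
The plan is to mirror the arc-index approach behind the knot bound $s_=(K)\le 2c(K)+2$, adapted to the cut-component decomposition of a spatial graph. First I would reduce to the non-splittable case. By the definition of the decomposition, the splitting-spheres partition $G$ into its $k$ non-splittable components $H_1,\dots,H_k$, and since a splitting-sphere misses $G$ entirely, the relevant quantities are additive: $c(G)=\sum_i c(H_i)$, $e(G)=\sum_i e(H_i)$, and $b(G)=\sum_i b(H_i)$, because each bouquet cut-component lies inside a unique non-splittable component. If each $H_i$ can be built equilaterally with at most $2c(H_i)+2e(H_i)+2b(H_i)-1$ sticks, then placing the $H_i$ in disjoint balls and uniformly rescaling each so that all sticks share one common length gives $s_=(G)\le \sum_i (2c(H_i)+2e(H_i)+2b(H_i)-1)=2c(G)+2e(G)+2b(G)-k$, which is the claim. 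So the entire problem is the per-component estimate $s_=(H)\le 2c(H)+2e(H)+2b(H)-1$.

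For a single non-splittable $H$ the construction has two ingredients. The combinatorial heart is an arc-presentation lemma: $H$ embeds in an open book with all vertices on the binding axis and each edge a union of arcs lying in the pages, using at most $c(H)+e(H)+b(H)$ arcs. This is the spatial-graph analogue of the Bae--Park estimate $\alpha(K)\le c(K)+2$; each edge contributes at least one arc, each crossing of a minimal diagram forces the usual extra page transitions, and the genuinely new phenomenon is that a loop must leave and re-enter the same vertex, so that each bouquet cut-component is charged exactly one additional arc, which is where the $+b$ term is born. The geometric ingredient then turns arcs into equal sticks: I would realize each arc by a ``tent'' of two equal segments whose apex sits in the interior of its page, at mid-height between the two binding endpoints and at radius $r=\sqrt{L^2-(\mathrm{span}/2)^2}$, so that both segments have the prescribed common length $L$. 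Choosing the binding heights so that every span is at most $2L$ makes all tents simultaneously realizable; since each page carries a single arc, distinct tents meet only along the axis, and the resulting stick graph is ambient isotopic to the arc presentation, so no crossing information is lost. This produces $2\cdot(\text{number of arcs})$ sticks, and a final saving of one stick comes from realizing a single arc joining two axis-consecutive binding points as the straight axis segment between them, those two points being placed a distance exactly $L$ apart; this yields the $-1$.

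Combining the two lemmas gives $s_=(H)\le 2(c(H)+e(H)+b(H))-1$, completing the per-component bound. The step I expect to be the main obstacle is the arc-presentation lemma, and in particular the sharp value of the constant: pushing the number of arcs down to exactly $c(H)+e(H)+b(H)$ requires a Morse/grid argument on a minimal diagram that simultaneously charges each crossing the correct number of transitions, charges each ordinary higher-valence vertex no extra arc by seating it on the axis as a shared binding point, and isolates precisely one surplus arc per bouquet cut-component from loops returning to their own vertex. A secondary difficulty is purely geometric bookkeeping in the equilateral realization: I must verify that the one straight axis-stick used for the $-1$ saving can be chosen of the common length $L$ and passes through no intermediate binding point, which forces the saved arc to join an axis-consecutive pair; arranging the presentation so that such an arc always exists, and is compatible with the high-valence vertices occurring at bouquets and at the cut vertices internal to $H$, is where the argument will need the most care.
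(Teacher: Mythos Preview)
Your overall architecture coincides with the paper's: reduce to non-splittable components, invoke the arc-index bound $\alpha(H)\le c(H)+e(H)+b(H)$ (which the paper imports as a theorem from \cite{LNO} rather than reproving), realize each of the $n\le c(H)+e(H)+b(H)$ arcs as a two-stick tent of common side length $L$ in its own page to obtain $2n$ equilateral sticks, and then save one stick per component. The divergence is exactly at this last step, and there your proposed mechanism has a real gap.

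Straightening one arc to a single axis segment of length $L$ requires, as you correctly flag, that this arc join \emph{axis-consecutive} binding points; otherwise the segment passes through intermediate binding points and corrupts the embedding. But an arc presentation need not contain such an arc---a minimal one never does, since that arc would permit a destabilization---and stabilizing once to manufacture one adds an arc and hence two sticks, wiping out the saving. There is no evident reason the specific presentations coming out of the construction in \cite{LNO} should always carry such an arc, so this is not merely bookkeeping. The paper's device sidesteps the issue completely: at the topmost binding point (of valence $m$, say) it deletes the $m$ inner sticks $d_1,\dots,d_m$ meeting that point, rotates each outer stick $e_i$ within its page about its lower binding end so that the free tip of $e_1$ lies close to the axis and the free tip of each $e_i$ ($i\ge 2$) sits at distance exactly $L$ from the tip of $e_1$, and then reconnects those tips to the tip of $e_1$ with $m-1$ new length-$L$ sticks $f_2,\dots,f_m$. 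This trades $2m$ sticks for $2m-1$ with no combinatorial hypothesis on the presentation and works uniformly even when the top binding point is a high-valence graph vertex.
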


For a nontrivial knot $K$, this result implies $s_{=}(K) \leq 2 c(K) + 3$ 
which is very close to the previously known upper bound.
Furthermore for both the unlink with $n$ components and the trivial $\theta_n$-curve even for any large $n$,
the stick number $s_{=}(G)$ is equal to the value $2e + 2b - k$ of $G$,
implying that this sum of the last three terms is best possible.

\section{Arc index of spatial graphs} \label{sec:arc}

In an {\em arc presentation\/} of a spatial graph $G$,
$G$ is constructed in an open-book with finitely many half-planes so that 
it meets each half-plane in exactly one simple arc with two different end-points on the binding axis.
Therefore the binding axis contains all vertices of $G$ and 
finitely many points from the interiors of edges of $G$,
and each edge of $G$ may pass from one page to another across the binding axis.
See Figure~\ref{fig:arc} for an arc presentation of the $\theta$-curve $5_1$ with eight pages.
The {\em arc index\/} $\alpha(G)$ is defined to be the minimal number of
pages among all possible arc presentations of $G$.

\begin{figure}[h]
\includegraphics{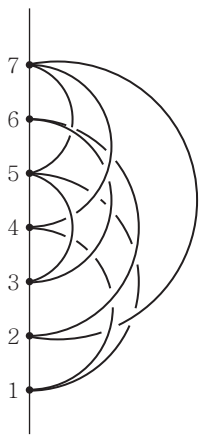}
\caption{Arc presentation of $5_1$}
\label{fig:arc}
\end{figure}

Bae and Park~\cite{BP} established an upper bound on the arc index of a nontrivial knot 
in terms of the crossing number.
The authors~\cite{LNO} generalized the argument used in~\cite{BP} 
to find an upper bound on the arc index of a spatial graph as follows.
This upper bound result is crucial in the proofs of our two main theorems.

\begin{theorem} {\textup{(Lee-No-Oh~\cite{LNO})}} \label{thm:LNO}
Let $G$ be any spatial graph with $e$ edges and $b$ bouquet cut-components. 
Then 
$$\alpha(G) \leq c(G)+e+b.$$
\end{theorem}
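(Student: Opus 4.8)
The plan is to prove the bound first for the atomic pieces of $G$ and then assemble. \emph{Step 1 (Decomposition).} I would cut $G$ along the maximal family $\mathcal{S}$ of splitting- and cut-spheres into its cut-components $C_1,\dots,C_m$. Since distinct pages of an open book meet only along the binding axis, arc presentations of the $C_i$ can be stacked on a common axis and identified at the shared cut vertices, so that the total number of arcs merely adds; hence $\alpha(G)\le\sum_i\alpha(C_i)$. The remaining invariants behave well under this cut: the edge set is partitioned, so $\sum_i e(C_i)=e$, because cut-spheres pass through vertices only; the bouquet cut-components are counted once each, so $\sum_i b(C_i)=b$ where $b(C_i)=1$ if $C_i$ is a bouquet and $b(C_i)=0$ otherwise; and a diagram of $G$ realizing $c(G)$ restricts to edge-disjoint sub-diagrams of the $C_i$, whence $\sum_i c(C_i)\le c(G)$. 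Thus it suffices to prove the per-component estimate $\alpha(C)\le c(C)+e(C)+b(C)$.

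\emph{Step 2 (Atoms that are knots).} A bouquet cut-component is non-splittable and has no cut vertex, and consists of a single vertex with loops; maximality of $\mathcal{S}$ forces it to be a one-loop bouquet, that is, a knot. For such a $C$ one has $e(C)=1$ and $b(C)=1$, so the required bound $\alpha(C)\le c(C)+2$ is precisely the Bae--Park inequality, which I would simply invoke.

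\emph{Step 3 (The core construction).} The main work is the loop-free atoms, for which I must show $\alpha(C)\le c(C)+e(C)$. Here I would first record the bookkeeping identity $\alpha(C)=e(C)+p$, where $p$ is the number of points in which the edges of $C$ meet the binding axis away from the vertices: each page carries exactly one arc, so counting arc-endpoints gives $2\alpha(C)=\sum_{\text{axis points}}(\text{valence})=2e(C)+2p$. Consequently the estimate is equivalent to producing an arc presentation with at most $p\le c(C)$ interior axis-crossings. I would obtain this by generalizing the Bae--Park construction: starting from a minimal diagram of $C$, place all vertices on the binding axis and realize the diagram in an open book (equivalently as a grid or rectangular diagram) so that every crossing is responsible for exactly one passage of an edge across the axis, while each vertex of arbitrary valence distributes its incident arcs to distinct pages. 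Counting then yields $p\le c(C)$ and hence $\alpha(C)\le c(C)+e(C)$.

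The hard part is Step 3: the Bae--Park open-book construction is already delicate for the $4$-valent crossing graph of a knot, and here it must be carried out in the presence of genuine vertices of arbitrary degree without ever spending more than one interior axis-crossing per crossing of the diagram. Verifying that such an efficient routing always exists, and that the resulting arcs can be assigned to distinct pages consistently with planarity, is the crux. Once it is in place, Steps 1 and 2 together with the additivity of $e$ and $b$ and the subadditivity of $c$ assemble the global bound by the short calculation $\alpha(G)\le\sum_i\bigl(c(C_i)+e(C_i)+b(C_i)\bigr)\le c(G)+e+b$.
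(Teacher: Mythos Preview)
The paper does not contain a proof of this statement. Theorem~\ref{thm:LNO} is quoted in Section~\ref{sec:arc} from the companion paper~\cite{LNO} and is used only as a black box in the proofs of Theorems~\ref{thm:stick} and~\ref{thm:equilateral}; the sole comment on its proof is that the authors ``generalized the argument used in~\cite{BP}.'' There is therefore no proof in the present paper against which to compare your proposal.

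For what it is worth, your outline matches that one-line description: reduce to cut-components and then run a Bae--Park type open-book construction on each piece. Two remarks on the outline itself. In Step~2 you assert that maximality of $\mathcal{S}$ forces every bouquet cut-component to have a single loop; this is not justified and is doubtful in general, since two loops sharing a vertex can be linked so that no $2$-sphere through the vertex separates them (picture a Hopf link with the two components pinched together at one point). Such a bouquet is a cut-component with $e\ge 2$, and the Bae--Park knot bound does not literally apply to it. Second, as you yourself flag, Step~3 is where the content lies: producing an open-book realization of a minimal diagram in which each crossing costs exactly one interior axis-crossing while genuine vertices of arbitrary valence sit on the binding is precisely the extension of~\cite{BP} carried out in~\cite{LNO}, and your proposal stops short of supplying it.
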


\section{Circular arc presentations} \label{sec:circular}

We introduce a variant of an arc presentation of a spatial graph $G$,
called a {\em circular arc presentation\/},
which is more useful to construct a stick presentation of $G$ in Section~\ref{sec:stick}.
First, take a circular disk $D$ in $\mathbb{R}^2$
whose boundary actually indicates the binding axis combined with point at infinity.
Now draw straight chords instead of arcs of the arc presentation.
During this procedure all the page numbers and the under/over crossings are preserved.
The chords are denoted by $l_1, \dots, l_n$ in the order of the page numbers,
so that if $l_i$ and $l_j$ share a crossing in the interior of $D$ and $i<j$, then $l_i$ passes under $l_j$.
Note that $\alpha(G)=n$.
The points of the graph lying on the boundary of $D$ are called the {\em binding points\/}. 
Note that each chord has two different binding points at its ends.
See Figure~\ref{fig:circular}.

\begin{figure}[h]
\includegraphics{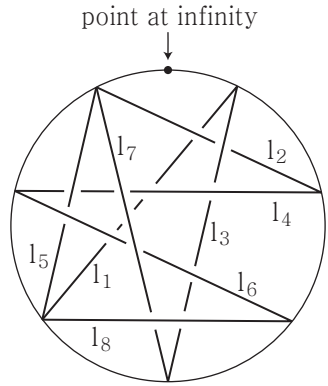}
\caption{Circular arc presentation of $5_1$}
\label{fig:circular}
\end{figure}

For a binding point $b$, the {\em initiating page number\/} $p(b)$ is
the smallest page number among all chords incident to $b$.
A chord $l_i$ is said to have an {\em initiating end\/} 
if this end point has the initiating page number $i$.
Then all chords are distinguished into three types as follows;
a {\em bi-initiating chord\/} if it has two initiating ends on both sides,
a {\em uni-initiating chord\/} if it has an initiating end only on one side, and
a {\em non-initiating chord\/} if it has no initiating ends.
In Figure~\ref{fig:circular}, 
the initiating page numbers of the seven binding points in clockwise order 
starting at point at infinity are 1, 2, 6, 3, 1, 4, and 2.
Furthermore, $l_1$ and $l_2$ are bi-initiating chords, $l_3$, $l_4$ and $l_6$ are uni-initiating chords, 
and the others are non-initiating chords.

\section{Stick presentations} \label{sec:stick}

In this section we will prove Theorem~\ref{thm:stick}.

\begin{proof}[Proof of Theorem \ref{thm:stick}]
Let $G$ be a spatial graph with $e$ edges and $v$ vertices, and 
$b$ is the number of bouquet cut-components of $G$.
Assume that $\alpha(G)=n$ and $C$ is its circular arc presentation with $n$ chords $l_1, \dots, l_n$. 
Further assume that $C$ has $n_2$, $n_1$ and $n_0$ numbers of bi-initiating, uni-initiating and 
non-initiating chords where $n_2 + n_1 + n_0 = n$.

The projection $\pi : \mathbb{R}^3 \rightarrow \mathbb{R}^2$ is defined by $\pi(x,y,z)=(x,y)$. 
For each $i = 1, \dots, n$, put a horizontal line segment $h_i$ in $D \times \{i\}$ so that $\pi(h_i)=l_i$
as in Figure~\ref{fig:stack}. 

\begin{figure}[h]
\includegraphics{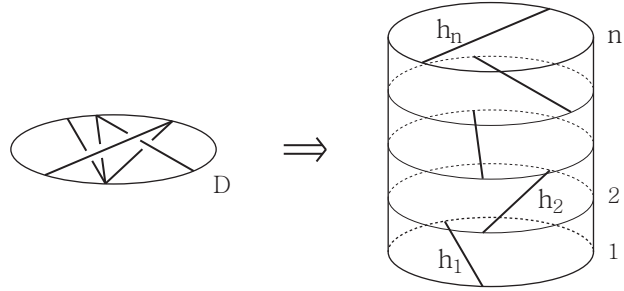}
\caption{Simple stack of $n$ chords} 
\label{fig:stack}
\end{figure}

Now, deform these line segments $h_i$ in an inductive manner on $i$, 
meanwhile their projections are preserved.
The segment $h_1$ stays in itself and so its highest $z$-coordinate is $z_1=1$.
Assume that $h_1, h_2, \dots, h_{k-1}$ are deformed inductively
with their highest integer-valued $z$-coordinates $z_1 < z_2 < \cdots < z_{k-1}$, respectively.

Deforming $h_k$ depends on the type of the chord $l_k$ as in Figure~\ref{fig:cases}.
If $l_k$ is a bi-initiating chord, raise up $h_k$ into $D \times \{ z_{k-1} \! + \! 1 \}$
with its highest $z$-coordinate $z_k = z_{k-1} \! + \! 1$.
If $l_k$ is a uni-initiating chord, 
then exactly one end $b$ of $l_k$ has the initiating page number $p(b) = k$.
Let $b'$ denote the other end with its initiating page number $p(b') = j' < k$.
Raise up $h_k$ obliquely so that its one end is $h_{j'} \cap \pi^{-1}(b')$
and the other end is $b \times \{ z_k \}$ for a sufficiently large integer $z_k > z_{k-1}$.
More precisely, the interior of the triangle between this new $h_k$ and 
a horizontal line segment $H$ in $D \times \{z_k\}$ with $\pi(H)=l_k$
has no intersection with any other $h_i$, $i < k$.
Lastly, if $l_k$ is a non-initiating chord, 
then both ends $b$ and $b'$ of $l_k$ have the initiating page numbers 
$p(b) = j < k$ and $p(b') = j' < k$.
Bend $h_k$ at the center $c \times \{ k \}$ into two connected line segments and 
raise up $h_k$ so that its ends are $h_{j} \cap \pi^{-1}(b)$ and $h_{j'} \cap \pi^{-1}(b')$,
and the center point goes to $c \times \{ z_k \}$ for a sufficiently large integer $z_k > z_{k-1}$.
Eventually the interiors of the two triangles between this new $h_k$ and 
a horizontal line segment $H$ in $D \times \{z_k\}$ with $\pi(H)=l_k$
have no intersection with any other $h_i$, $i < k$.

\begin{figure}[h]
\includegraphics{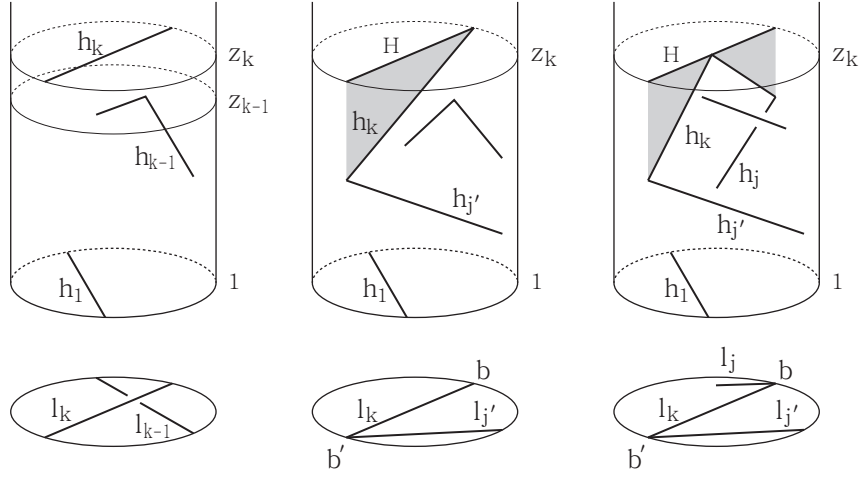}
\caption{Bi-initiating chord, uni-initiating chord and non-initiating chord cases} 
\label{fig:cases}
\end{figure}

Then the new stick spatial graph $G' = \cup_{i=1}^{n} h_i$ in $D \times [1,\infty)$
is isotopic to $G$ and consists of $n_2 + n_1 + 2 n_0 = n + n_0$ sticks as desired.
Therefore,
$$ s(G) \leq \alpha(G) + n_0. $$

On the other hand, split all chords in $C$ slightly away from binding points.
Then there are total $2n$ ends of the $n$ chords.
Note that the number of binding points is $n-e+v$ by an easy combinatorial calculation.
Since each binding point is an initiating end of exactly one of chords incident to it,
there are exactly $n-e+v$ initiating ends among these $2n$ ends.
Thus $n+e-v$ ends are not initiating.
Since non-initiating chords have no initiating ends,
the number $n_0$ of non-initiating chords is at most $\frac{n+e-v}{2}$. 
This implies
$$s(G) \leq \frac{3}{2} \alpha(G) + \frac{e}{2} - \frac{v}{2}.$$

Now we complete the proof by applying Theorem~\ref{thm:LNO}.
\end{proof}

\section{Equilateral stick presentations} \label{sec:equilateral}

In this section we will prove Theorem~\ref{thm:equilateral}.

\begin{proof}[Proof of Theorem \ref{thm:equilateral}]

We will follow the main argument in the proof of Theorem~1.1 in~\cite{KNO}.
Let $G$ be a spatial graph with $e$ edges and $b$ bouquet cut-components.
Suppose that $G$ has $k$ non-splittable components.
Let $G_1$ be a non-splittable component of $G$.

Assume that $\alpha(G_1)=n$ and $A$ is its arc presentation with $n$ arcs. 
Let $M$ be a sufficiently large number.
Now we stretch $A$ so that each arc is deformed to two connected sticks of length $M$ each
on the related page, while all binding points are fixed, as drawn in Figure~\ref{fig:equilateral}.
Already we obtain a equilateral spatial graph of $G_1$ consisting of $2 n$ sticks of length $M$.
Therefore,
$s_{=}(G_1) \leq 2 \alpha(G_1)$.

\begin{figure}[h]
\includegraphics{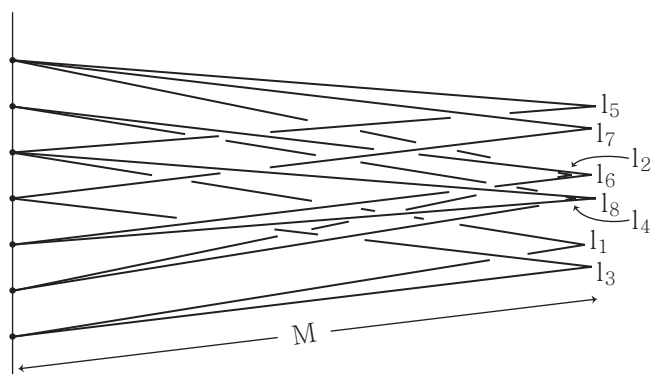}
\caption{Equilateral stick presentation of $5_1$} 
\label{fig:equilateral}
\end{figure}

Now we will reduce one more stick from $A$.
Assume that the top binding point has $m$ sticks incident to, say $d_1, \dots, d_m$.
Let $e_1, \dots, e_m$ denote the other $m$ sticks connected to $d_1, \dots, d_m$, respectively.
As in Figure~\ref{fig:reducing}, delete all $m$ sticks $d_1, \dots, d_m$.
Rotate the stick $e_1$ counterclockwise along its endpoint at the binding axis
on the related page until $e_1$ is close enough to the binding axis.
Now rotate each $e_i$, $i=2,\dots,m$, counterclockwise along its endpoint at the binding axis
on the related page so that the distance between the other endpoints of $e_1$ and $e_i$
which are not at the binding axis is $M$.
Glue a new stick $f_i$ of length $M$ to these endpoints as a substitution for $d_i$.
Clearly this formation does not change the spatial graph type,
but we reduce one stick from the original.
Hence the resulting equilateral spatial graph $G_1$ satisfies
$$ s_{=}(G_1) \leq 2 \alpha(G_1) - 1. $$

\begin{figure}[h]
\includegraphics{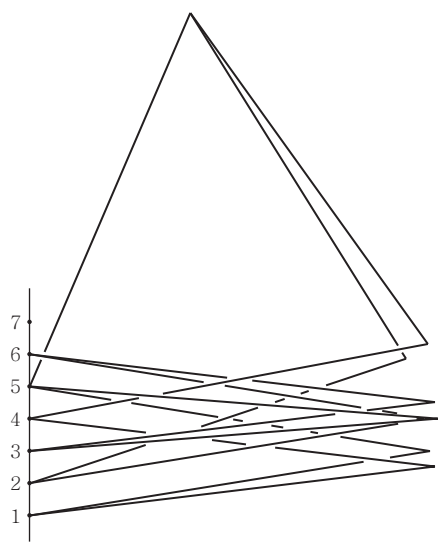}
\caption{Reducing one more stick} 
\label{fig:reducing}
\end{figure}

Furthermore, if $G$ can be split into two components $G_1$ and $G_2$,
then obviously $s_{=}(G) = s_{=}(G_1) + s_{=}(G_2)$.
Theorem \ref{thm:equilateral} follows directly from Theorem~\ref{thm:LNO}.

Note that unlike the case of nontrivial knots in~\cite{KNO},
it is possible that there is an arc connecting the top and the bottom binding points,
for example, the trivial $\theta_n$-curve.
We cannot repeat the same argument at the bottom binding point.
\end{proof}

\end{document}